\DeclareMathOperator{\spn}{span}
\def\RR{\mathbb{R}} 
\def\CC{\mathbb{C}} 
\def\NN{\mathbb{N}} 
\newtheorem{thm}{Theorem}[section]
\newtheorem{lem}[thm]{Lemma}
\newtheorem{prop}[thm]{Proposition}
\theoremstyle{definition}
\newtheorem{defn}[thm]{Definition}
\theoremstyle{remark}
\newtheorem{rmk}[thm]{Remark}
\numberwithin{equation}{section}
\def\BigRoman{\uppercase\expandafter{\romannumeral\number\count 
		255 }}
\def\Romannumeral{\afterassignment\BigRoman\count255=}
\begin{document}
	
	\title{Invariance of Diederich-Fornaess index}         
	\author{Jihun Yum}        
	\address{Department of Mathematics, Pohang University of Science and Technology, Pohang, 790-784, Republic of Korea}
	\email{wadragon@postech.ac.kr}
	\thanks{This research was supported by the SRC-GAIA (NRF-2011-0030044) through the National Research Foundation of Korea (NRF) funded by the Ministry of Education.}

	\begin{abstract}
		We show that the Diederich-Fornaess index of a domain in a Stein manifold is invariant under CR-diffeomorphisms. For this purpose we also improve CR-extension theorem.  
	\end{abstract}

	\maketitle

	\section{\bf Introduction}
	
	Let $D$ be a relatively compact domain with $C^k$ $(k \ge 1)$ smooth boundary in a complex manifold $M$. A $C^1$ function $\rho$ defined on a neighborhood $U$ of $\overline{D}$ is called a (global) {\it defining function} of $D$ if $D = \{z \in U : \rho(z) < 0 \}$ and $d\rho(z) \neq 0$ for any $z \in \partial D$. In 1977, K. Diederich and J. E. Fornaess (\cite{DieForn1977}) showed that, if $D$ is a pseudoconvex domain with $C^2$ boundary and $M$ is a Stein manifold, there exist a positive constant $\eta$ with $0 < \eta < 1$ and a defining function $\rho$ such that $-(-\rho)^{\eta}$ is strictly plurisubharmonic on $D$. The supremum of all such constant $\eta$ is called the {\it Diederich-Fornaess exponent} of $\rho$, denoted by $\eta_{\rho}(D)$; if no such $\eta$ exists, we define $\eta_{\rho}(D) = 0$. The supremum of all Diederich-Fornaess exponents is called the {\it Diederich-Fornaess index} of $D$, denoted by $\eta(D)$. Note that a generalization to a bounded pseudoconvex domain in $\CC^n$ with Lipschitz boundary was studied by Harrington (\cite{Harrington2007}).
	
	If $D$ is a strongly pseudoconvex domain, then there is a strictly plurisubharmonic defining function. Consequently, $\eta(D) = 1$. But the converse is not true; Fornaess and Herbig (\cite{ForHer2007}, \cite{ForHer2008}) showed that, if a $C^{\infty}$ smooth relatively compact domain $D \subset \CC^n$ admits a plurisubharmonic defining function, then $\eta(D) = 1$. In particular, the Thullen domain $\{(z,w) \in \CC^2 :  |z|^2 + |w|^4 < 1 \}$ has the Diederich-Fornaess index one, but it is clearly not strongly pseudoconvex. In fact, much more is known. For any sufficiently small $\eta > 0$ there exists a worm domain with Diederich-Fornaess index less than $\eta$ (\cite{DieForn1977-2}). Recently, Krantz, Liu, and Peloso (\cite{KranLiuPelo2016}) showed that for a bounded pseudoconvex domain $D \subset \CC^2$ with $C^{\infty}$ smooth boundary, the Diederich-Fornaess index is one if the Levi-flat sets form a real curve transversal to the holomorphic tangent vector fields on $\partial D$.
	
	Many significant far-reaching conclusions follow from the condition $\eta(D) > 0$. To cite only a few, we refer the reader to \cite{BernChar2000}, \cite{Blocki2005}, \cite{ChenFu2011}, \cite{DieForn1977}, \cite{Harrington2011}, \cite{Kohn1999}, \cite{PinZam2014}, and others.\\

	On the other hand, the following natural question has been posed. We express our gratitude to M. Adachi as well as S. Yoo., for pointing out this question to us. \\

	\begin{quote}
	{\bf Question:} {\it Is the Diederich-Fornaess index a biholomorphic or a CR invariant?}
	\end{quote}
	
	\vspace{3 mm}
	
	\noindent
	Indeed, the main theorem of this article is as follows:

	\begin{thm} \label{1.1}
	Let $M_1$, $M_2$ be Stein manifolds with dimension $n$ $(n \ge 2)$. Let $D_1 \subset M_1$, $D_2 \subset M_2$ be relatively compact domains with connected $C^k (k \ge 1)$ smooth boundaries. If there exists a $C^k$ smooth CR-diffeomorphism $f : \partial D_1 \rightarrow \partial D_2$, then the Diederich-Fornaess indices of $D_1$ and $D_2$ are equal.  
	\end{thm}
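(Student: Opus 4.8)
The plan is to reduce the statement to the fact that the Diederich--Fornaess index is preserved under a biholomorphism of the interiors that extends to a $C^k$ diffeomorphism of the closures, and then to manufacture such a map from the CR-diffeomorphism $f$ using the (improved) CR-extension theorem. So suppose for the moment that we have a $C^k$ diffeomorphism $G\colon\overline{D_1}\to\overline{D_2}$ of manifolds with boundary whose restriction $G|_{D_1}\colon D_1\to D_2$ is biholomorphic and with $G|_{\partial D_1}=f$. Let $\rho_2$ be any defining function of $D_2$ on a neighborhood of $\overline{D_2}$. Then $\rho_2\circ G$ is $C^k$ on $\overline{D_1}$, negative on $D_1$, vanishing on $\partial D_1$, with $d(\rho_2\circ G)=d\rho_2\circ dG\neq 0$ on $\partial D_1$ since $dG$ is invertible there; extending it across $\partial D_1$ through a collar produces a defining function $\rho_1$ of $D_1$. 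On $D_1$ one has $-(-\rho_1)^\eta=\bigl(-(-\rho_2)^\eta\bigr)\circ G$, and because strict plurisubharmonicity is preserved by the biholomorphism $G|_{D_1}$, the left side is strictly plurisubharmonic on $D_1$ exactly when $-(-\rho_2)^\eta$ is strictly plurisubharmonic on $D_2$. Hence $\eta_{\rho_1}(D_1)=\eta_{\rho_2}(D_2)$, giving $\eta(D_1)\ge\eta(D_2)$; applying the same to $G^{-1}$ yields the reverse inequality, so everything reduces to constructing $G$.

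To build $G$, I would embed the target as a closed complex submanifold $\iota\colon M_2\hookrightarrow\CC^N$ and set $g=\iota\circ f\colon\partial D_1\to\CC^N$, whose $N$ components are $C^k$ CR functions on the connected boundary $\partial D_1$. Since $n\ge 2$, the improved CR-extension theorem extends each component holomorphically into $D_1$ with $C^k$ regularity up to $\overline{D_1}$, yielding a holomorphic map $\tilde G\colon D_1\to\CC^N$ that is $C^k$ on $\overline{D_1}$ with $\tilde G|_{\partial D_1}=g$. Writing $M_2$ near the compact set $\tilde G(\overline{D_1})$ as the common zero set of holomorphic functions $h_1,\dots,h_m$, each $h_\alpha\circ\tilde G$ is holomorphic on $D_1$, continuous on $\overline{D_1}$, and vanishes on $\partial D_1$, so by the maximum principle it vanishes identically; thus $\tilde G(D_1)\subset M_2$ and we obtain a holomorphic $G\colon\overline{D_1}\to M_2$ with $G|_{\partial D_1}=f$.

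Running the same construction on $f^{-1}$ gives a holomorphic $H\colon\overline{D_2}\to M_1$ with $H|_{\partial D_2}=f^{-1}$. Because $f$ is a diffeomorphism of the boundaries, $dG$ is nondegenerate along $\partial D_1$, so $G$ is a local diffeomorphism near $\partial D_1$ carrying the inner side to the inner side; hence $G(D_1)\subset D_2$ near the boundary, and since $G(D_1)$ is open and connected with $\partial\bigl(G(D_1)\bigr)\subset G(\partial D_1)=\partial D_2$, relative compactness forces $G(D_1)\subset D_2$, so $H\circ G$ is defined on $\overline{D_1}$. This composition is holomorphic on $D_1$ and equals $f^{-1}\circ f=\mathrm{id}$ on $\partial D_1$; embedding $M_1$ in some $\CC^{N'}$, the components of $H\circ G-\mathrm{id}$ are holomorphic, continuous up to the boundary, and vanish on $\partial D_1$, hence vanish on $D_1$, so $H\circ G=\mathrm{id}$ and symmetrically $G\circ H=\mathrm{id}$. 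Therefore $G$ is a $C^k$ diffeomorphism of $\overline{D_1}$ onto $\overline{D_2}$, biholomorphic on the interior and equal to $f$ on the boundary, which is precisely the map required by the first paragraph, and the index equality follows.

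The main obstacle is the construction of $G$ at this low regularity: the CR-extension theorem must produce holomorphic extensions that are $C^k$ up to a merely $C^k$ boundary, and it must supply enough boundary regularity and nondegeneracy of $dG$ along $\partial D_1$ to guarantee both that $G$ is a diffeomorphism up to the boundary and that it respects the two sides of the boundary. This is exactly the improvement of the CR-extension theorem announced in the abstract; once it is in hand, the landing in $M_2$, the invertibility via the symmetric extension of $f^{-1}$, and the boundary-uniqueness argument are formal, so the essential analytic difficulty is concentrated in the boundary behavior of the CR extension.
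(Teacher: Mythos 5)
Your overall strategy is the same as the paper's: embed $M_2$ in $\CC^N$, extend $f$ componentwise by the CR-extension theorem, show the extension lands in $M_2$, show it maps $D_1$ onto/into $D_2$, invert it using the analogous extension of $f^{-1}$ plus boundary uniqueness, and finally transport defining functions. Two of your variations are legitimate and even simplify the paper: pulling $\rho_2$ back to $\overline{D_1}$ and then extending the \emph{scalar} function across a collar avoids the paper's Lemma 3.1 (which instead extends $F$ itself to a diffeomorphism between open neighborhoods of the closures); and cutting out $M_2$ by holomorphic functions plus the maximum principle replaces the paper's chain-of-charts identity-theorem argument for $F(\overline{D_1})\subset M_2$. (For the latter you must take the $h_\alpha$ to be \emph{global} holomorphic functions on $\CC^N$ --- these exist by Cartan's Theorem A applied to the ideal sheaf of the closed submanifold $M_2$ --- since a priori you do not know that $\tilde G(\overline{D_1})$ stays near $M_2$, so functions defined only near $M_2$ cannot be composed with $\tilde G$.)

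However, there is a genuine gap where you write ``Because $f$ is a diffeomorphism of the boundaries, $dG$ is nondegenerate along $\partial D_1$.'' For a general smooth map this implication is false: knowing that $d_pG$ restricts to an isomorphism $T_p(\partial D_1)\to T_{f(p)}(\partial D_2)$ only gives rank at least $2n-1$; the normal derivative could vanish or be sent into $T_{f(p)}(\partial D_2)$. The correct argument --- Step 1 of the paper's proof of Theorem 1.2 --- uses holomorphy essentially: since $G$ is holomorphic on $D_1$ and $C^1$ up to the boundary, $d_pG$ is complex-linear ($J_2\circ d_pG=d_pG\circ J_1$), so $\ker d_pG$ is $J_1$-invariant; if it were nonzero it would have real dimension at least $2$ and hence would meet the hyperplane $T_p(\partial D_1)$ nontrivially, contradicting injectivity of $d_pf$. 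Your closing paragraph deflects this point to the ``improved CR-extension theorem,'' but that theorem (Theorem 2.5 of the paper) supplies only existence and $C^s$ regularity of the extension, not nondegeneracy of its differential. A second, related gap: you assert ``$G(D_1)$ is open,'' but openness of a holomorphic map between $n$-dimensional manifolds requires $\det J_{\CC}G\neq 0$ \emph{everywhere} on $D_1$; a map whose Jacobian is merely not identically zero, such as $(z,w)\mapsto(z,zw)$, fails to be open. This interior nondegeneracy is the paper's Step 2, proved by observing that the degeneracy locus would be a compact analytic variety of dimension $n-1\ge 1$ inside the Stein manifold $M_1$, which is impossible. Without these two steps you cannot conclude $G(\overline{D_1})\subset\overline{D_2}$, so the composition $H\circ G$ is not even defined and the inversion argument, and hence the index comparison, does not get started.
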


	There is a preceding result by M. Adachi (\cite{Adachi2015}) that, for a relatively compact domain in a complex manifold with $C^{\infty}$ smooth Levi-flat boundary, the Diederich-Fornaess index is invariant under CR-diffeomorphisms. \\

	Our proof of Theorem \ref{1.1} requires the following modification of the  theorem. \\

	\begin{thm} \label{1.2}
		Let $M_1$, $M_2$ be Stein manifolds with dimension $n$ $(n \ge 2)$. Let $D_1 \subset M_1$, $D_2 \subset M_2$ be relatively compact domains with connected $C^k (k \ge 1)$ smooth boundaries. If there exists $C^s (1 \le s \le k)$ smooth CR-diffeomorphism $f : \partial D_1 \rightarrow \partial D_2$, then there exists the unique $C^s$ smooth extension $F : \overline{D_1} \rightarrow \overline{D_2}$ such that $F|_{\partial D_1} = f$ and $F|_{D_1} : D_1 \rightarrow D_2$ is a biholomorphism.
	\end{thm}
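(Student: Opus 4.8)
The plan is to construct the extension by embedding the target, extending the coordinate components as CR functions, and then upgrading the resulting holomorphic map to a biholomorphism via a boundary analysis. After replacing $M_2$ by the connected component containing $D_2$, I would fix a proper holomorphic embedding $\iota : M_2 \hookrightarrow \CC^N$ (Remmert), so that $\iota(M_2)$ is a closed complex submanifold and, by Cartan's Theorem A, the common zero locus of finitely many entire functions $h_1,\dots,h_m$ on $\CC^N$. Writing $\iota\circ f = (g_1,\dots,g_N)$, each $g_j$ is a $C^s$ CR function on $\partial D_1$ because $\iota$ is holomorphic and $f$ is CR.

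The core analytic step is to extend each $g_j$ to a function $G_j$ holomorphic on $D_1$ and of class $C^s$ on $\overline{D_1}$ with $G_j|_{\partial D_1}=g_j$. Since $D_1$ is not assumed pseudoconvex, I would not solve $\bar\partial$ on $D_1$ itself; instead I would exploit that $M_1$ is Stein. Extending $g_j$ to some $\tilde g_j\in C^s(\overline{D_1})$, the CR condition guarantees that $\bar\partial\tilde g_j$, extended by zero outside $\overline{D_1}$, is a global $\bar\partial$-closed $(0,1)$-form $\beta_j$ on $M_1$. Solving $\bar\partial u_j=\beta_j$ on the Stein manifold $M_1$, the solution is holomorphic on the unbounded side $M_1\setminus\overline{D_1}$, which is connected and not relatively compact when $n\ge 2$; choosing the solution vanishing at infinity forces $u_j\equiv 0$ there, hence $u_j=0$ on $\partial D_1$, and $G_j:=\tilde g_j-u_j$ is the desired one-sided holomorphic extension. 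The $C^s$ regularity of $G_j$ up to $\partial D_1$, which must come from boundary regularity of the ambient $\bar\partial$-solution, is precisely the improvement of the CR-extension theorem advertised in the abstract, and I expect this to be the main obstacle.

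Assembling $G=(G_1,\dots,G_N):\overline{D_1}\to\CC^N$, each $h_\ell\circ G$ is holomorphic on $D_1$, continuous on the compact set $\overline{D_1}$, and vanishes on $\partial D_1$ since $G(\partial D_1)=\iota(\partial D_2)\subseteq\iota(M_2)$; the maximum modulus principle, valid because $D_1$ is connected and relatively compact, then gives $h_\ell\circ G\equiv 0$, so $G(D_1)\subseteq\iota(M_2)$. Setting $F=\iota^{-1}\circ G$ produces a $C^s$ map $F:\overline{D_1}\to M_2$, holomorphic on $D_1$, with $F|_{\partial D_1}=f$; uniqueness follows from the same maximum principle applied to the components of the difference of two competing extensions.

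It remains to see that $F$ is a biholomorphism onto $D_2$. At each $z_0\in\partial D_1$ the differential $dF(z_0)$ is $\CC$-linear (holomorphicity up to the boundary) and restricts to $df(z_0)$; since $f$ is a CR-diffeomorphism, $df(z_0)$ carries the complex tangent space of $\partial D_1$ isomorphically onto that of $\partial D_2$ and sends the remaining real tangent direction off it, so $dF(z_0)$ is invertible. Hence $F$ is a local biholomorphism on a collar of $\partial D_1$ and, being injective on the compact $\partial D_1$, is injective on that collar. Now $F(D_1)$ is open with $\partial\bigl(F(D_1)\bigr)\subseteq\partial D_2$, so it is a union of connected components of $M_2\setminus\partial D_2$; because $M_2\setminus\overline{D_2}$ is connected and not relatively compact while $F(\overline{D_1})$ is compact, the exterior component cannot occur, forcing $F(D_1)\subseteq D_2$. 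A limiting argument then shows $F:D_1\to D_2$ is proper and (being open and closed in the connected $D_2$) surjective, and computing its degree on regular values near $\partial D_2$—whose preimages lie in the collar where $F$ is injective—gives degree one, so $F|_{D_1}$ is a biholomorphism and $F:\overline{D_1}\to\overline{D_2}$ is the desired unique $C^s$ extension.
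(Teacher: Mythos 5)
Your overall architecture---embed $M_2$ in $\CC^N$, extend the components of $f$ by a Bochner--Hartogs type theorem, check that the image lies in $M_2$, then upgrade to a biholomorphism---is the same as the paper's, and two of your sub-arguments are sound (your maximum-principle proof that $G(D_1)\subseteq\iota(M_2)$ is in fact cleaner than the paper's chart-patching argument, and your boundary computation that $dF(z_0)$ is invertible is essentially the paper's Step 1). But there is a genuine gap at the sentence ``Now $F(D_1)$ is open.'' In dimension $n\ge 2$ a holomorphic map need not be open: $(z,w)\mapsto(z,zw)$ is the standard counterexample. Your boundary analysis gives invertibility of $dF$ only on a collar of $\partial D_1$, and every subsequent step---$F(D_1)$ being a union of components of $M_2\setminus\partial D_2$, hence $F(D_1)\subseteq D_2$, hence properness, surjectivity, and the degree count---rests on openness on all of $D_1$. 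The paper devotes a separate step (Step 2) to exactly this point: the degeneracy set $Z=\{z\in D_1:\det J_{\CC}F(z)=0\}$ is, if nonempty, a closed analytic subvariety of $D_1$ of pure dimension $n-1\ge 1$; by the boundary nondegeneracy and continuity of $\det J_{\CC}F$ on $\overline{D_1}$ it stays away from $\partial D_1$, hence is compact; and a Stein manifold admits no compact positive-dimensional analytic subvariety---contradiction. Note that this uses Steinness of $M_1$, not just of $M_2$; some argument of this kind must be inserted before your topological reasoning can begin. Once openness is available, the paper also obtains injectivity far more cheaply than your Remmert-plus-degree route: it applies the entire extension construction to $f^{-1}$ to produce $G:\overline{D_2}\to\overline{D_1}$, and concludes $G\circ F=\mathrm{id}_{\overline{D_1}}$ and $F\circ G=\mathrm{id}_{\overline{D_2}}$ from uniqueness of holomorphic extensions of CR functions, with no branched-covering theory needed.

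Two further points on your analytic core. As stated, the jump construction is incorrect: for an arbitrary $C^s$ extension $\tilde g_j$, the CR condition kills only the \emph{tangential} part of $\bar\partial\tilde g_j$ along $\partial D_1$, so the extension of $\bar\partial\tilde g_j$ by zero is in general discontinuous and not $\bar\partial$-closed as a current; one must first normalize the extension (e.g.\ replace $\tilde g_j$ by $\tilde g_j-h_j\rho$, where $\bar\partial\tilde g_j=h_j\,\bar\partial\rho$ on $\partial D_1$) so that $\bar\partial\tilde g_j$ vanishes on the boundary. Likewise ``the solution vanishing at infinity'' should be ``a compactly supported solution,'' whose existence is precisely $H^{0,1}_c(M_1)=0$; the paper derives this from Serre duality and $H^{n,n-1}(M_1)=0$, and the connectedness of $M_1\setminus\overline{D_1}$ that you invoke follows from connectedness of $\partial D_1$ (Lemma \ref{2.4}), not from $n\ge 2$. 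Finally, the $C^s$-up-to-the-boundary regularity that you rightly flag as the main obstacle is not re-proved in the paper either: it is quoted as Proposition \ref{2.3} from Laurent-Thi\'ebaut, so your ``core analytic step'' is, modulo the corrections above, a sketch of the proof of the cited result rather than a shortcut around it.
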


	\section{\bf Improving CR-extension theorem} \label{2}

	Let $D$ be a domain in a complex manifold $M$, and $T_p(\partial D)$, $T_pM$ be the {\it real tangent space} of $\partial D$, $M$ at $p \in \partial D$, respectively. Then the {\it complexified tangent space of $T_pM$}, $\CC T_pM := \CC \otimes T_pM$, can be decomposed into the {\it holomorphic tangent space} $T^{(1,0)}_pM$ and {\it the anti-holomorphic tangent space} $T^{(0,1)}_pM$, i.e. $\CC T_pM = T^{(1,0)}_pM \oplus T^{(0,1)}_pM$. Let $T^{(1,0)}_p(\partial D) := \CC T_p(\partial D) \cap T^{(1,0)}_pM$ and  $T^{(0,1)}_p(\partial D) := \CC T_p(\partial D) \cap T^{(0,1)}_pM$.

	\begin{defn}
		A function $f \in C^1(\partial D)$ is called {\it a CR-function} on $\partial D$ if for all $p \in \partial D$ one has
		$$ L(f) = 0  \hspace{5 mm} \text{ for all } L \in T^{(0,1)}_p(\partial D). $$
	\end{defn}
	
	\begin{defn}
		Let $D_1$, $D_2$ be domains in complex manifolds $M_1$, $M_2$, respectively. A $C^1$ smooth map $F : \partial D_1 \rightarrow \partial D_2$ is called a {\it CR-map} if 
		$$ F(T^{(0,1)}_p(\partial D_1)) \subset T^{(0,1)}_p(\partial D_2) \hspace{5 mm} \text{ for all } p \in \partial D_1$$
		A $C^1$ smooth map $F : \partial D_1 \rightarrow \partial D_2$ is called a {\it CR-diffeomorphism} if it is a CR-map and a diffeomorphism.  
	\end{defn}
	
	\begin{defn}
		Let $H^{p,q}(M)$ be the $(p,q)$-th {\it Dolbeault cohomology of $M$} defined by
		$$ H^{p,q}(M) := \frac{\{\bar{\partial} \text{-closed (p,q)-form}\}}{\{\bar{\partial} \text{-exact (p,q)-form}\}} $$
		Let $H^{p,q}_c(M)$ be the $(p,q)$-th {\it Dolbeault cohomology of $M$ with compact support} defined by
		$$ H^{p,q}_c(M) := \frac{\{\bar{\partial} \text{-closed (p,q)-form with a compact support}\}}{\{\bar{\partial} \text{-exact (p,q)-form with a compact support}\}} $$
	\end{defn}
	
	\vspace{3 mm}

	It is well known that for a bounded domain $D$ in $\CC^n (n \ge 2)$ with connected boundary, every CR-function $f : \partial D \rightarrow \CC$ extends to $F : \overline{D} \rightarrow \CC$ holomorphically (\cite{Range1986}). We modify it for a relatively compact domain in a Stein manifold.
	
	\begin{rmk}
		The theorem above is widely known as Bochner-Hartogs theorem; but it was actually proven by Severi and Fichera (\cite{Fichera1957}, \cite{Severi1931}; see also \cite{Range2002}). We are indebted to M. Range for pointing this out. 
	\end{rmk}

	\begin{prop} [\cite{LaurThie2011}] \label{2.3}
		Let $M$ be a non-compact complex manifold such that $H^{0,1}_c(M) = 0$. For any relatively compact domain $D \subset M$ with $C^k (k \ge 1)$ smooth boundary such that $M \setminus \overline{D}$ is connected and any CR function $f$ of class $C^s (1 \le s \le k)$ on $\partial D$, there is a $C^s$ smooth function $F$ on $\overline{D}$, holomorphic on $D$, such that $F|_{\partial D} = f$.
	\end{prop}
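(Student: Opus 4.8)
The plan is to follow the classical Bochner--Hartogs strategy, but to replace the explicit Bochner--Martinelli kernel (which is unavailable on a general Stein manifold) by a solution of a global $\bar\partial$-problem; this is precisely where the cohomological hypothesis $H^{0,1}_c(M)=0$ enters. First I would fix a $C^k$ defining function $\rho$ of $D$ and extend $f$ to a $C^s$ function $\tilde f$ on a neighborhood $U$ of $\overline D$ in $M$, with no holomorphy assumed. The CR condition $Lf=0$ for all $L\in T^{(0,1)}_p(\partial D)$ guarantees that the \emph{tangential} part of $\bar\partial\tilde f$ vanishes along $\partial D$, so that $\bar\partial\tilde f|_{\partial D}$ is a multiple of $\bar\partial\rho$. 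I would then correct $\tilde f$ by subtracting a suitable transverse term (a multiple of $\rho$, and further Taylor terms in the $\rho$-direction if higher-order vanishing is needed) so that $\bar\partial\tilde f$ vanishes on $\partial D$ to the order required in the regularity step below.

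Next I would define the $(0,1)$-form $g$ by $g=\bar\partial\tilde f$ on $\overline D$ and $g=0$ on $M\setminus D$. Since $\bar\partial\tilde f$ vanishes on $\partial D$, this zero-extension is a well-defined form of the appropriate regularity; it is $\bar\partial$-closed (on $D$ it equals $\bar\partial\bar\partial\tilde f=0$, and it vanishes outside $D$) and its support lies in $\overline D$, hence is compact. By the hypothesis $H^{0,1}_c(M)=0$ there exists a compactly supported solution $u$ of $\bar\partial u = g$ on $M$.

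The third step exploits the topological hypotheses. On $M\setminus\overline D$ we have $\bar\partial u = 0$, so $u$ is holomorphic there; since $u$ has compact support and $M\setminus\overline D$ is connected and non-compact, $u$ vanishes near infinity and the identity theorem forces $u\equiv 0$ on all of $M\setminus\overline D$, whence $u=0$ on $\partial D$ by continuity. Setting $F:=\tilde f-u$ on $\overline D$, one finds $\bar\partial F = g-g=0$ on $D$, so $F$ is holomorphic on $D$, while $F|_{\partial D}=f-0=f$. This produces the desired extension at the level of existence and boundary values.

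The main obstacle is boundary regularity: one must upgrade the a priori merely continuous $u$ to $u\in C^s(\overline D)$, equivalently $F\in C^s(\overline D)$. Interior ellipticity of $\bar\partial$ yields regularity of $u$ inside $D$ and smoothness on $M\setminus\overline D$, but controlling all derivatives up to $\partial D$ is delicate. The plan is to arrange, in the first step, that $g=\bar\partial\tilde f$ vanishes to high enough order at $\partial D$ so that its zero-extension is as smooth as $f$ permits, and then to invoke the regularity theory for the $\bar\partial$-equation together with boundary regularity for holomorphic functions carrying $C^s$ CR data. This is exactly the technical point carried out in the cited work of Laurent--Thi\'ebaut, and it is where I expect the real work to lie; the cohomological vanishing and the connectedness argument are comparatively soft by contrast.
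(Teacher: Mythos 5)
The paper itself gives no proof of this proposition: it is imported verbatim from Laurent-Thi\'ebaut \cite{LaurThie2011}, so the only thing to compare against is the classical argument in that reference, which is essentially what you have reconstructed. Your outline (flatten $\bar\partial\tilde f$ along $\partial D$ using the CR condition, zero-extend to a compactly supported $\bar\partial$-closed $(0,1)$-form $g$, solve $\bar\partial u=g$ with compact support via $H^{0,1}_c(M)=0$, use connectedness and non-compactness of $M\setminus\overline{D}$ plus the identity theorem to force $u\equiv 0$ there, and set $F=\tilde f-u$) is the standard proof and is sound. Three points need sharper treatment than your sketch gives them. First, $\bar\partial$-closedness of the zero extension does not follow from being closed on each side and continuous across $\partial D$; it must be checked as a current via Stokes: for a test $(n,n-2)$-form $\psi$ one has, up to sign, $\int_D\bar\partial\tilde f\wedge\bar\partial\psi=\int_{\partial D}\tilde f\,\bar\partial\psi$, and the boundary term vanishes precisely by the (weak form of the) CR condition, equivalently by the flatness you arranged. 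Second, $H^{0,1}_c(M)=0$ is a hypothesis about smooth forms, while your $g$ has only finite regularity, so producing a compactly supported solution of the right class needs the Dolbeault isomorphism between smooth and current cohomology with compact support, plus interior elliptic regularity on the boundaryless manifold $M$. Third, the $C^s(\overline{D})$ bookkeeping you defer is genuinely the heart of the matter: the naive correction $\tilde f\mapsto\tilde f-v\rho$, with $v$ the normal component of $\bar\partial\tilde f$ on $\partial D$, is only $C^{s-1}$, so each order of flatness naively costs a derivative; achieving sufficient flatness without degrading regularity is exactly the technical lemma of \cite{LaurThie2011}. None of this breaks your plan; it locates precisely where the cited work earns the statement.
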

	
	\begin{lem} \label{2.4}
		Let $M$ be a connected smooth real manifold with dimension $n$ $(n \ge 1)$, and $D \subset M$ be a domain with $C^k (k \ge 1)$ smooth boundary $\partial D$. If $\partial D$ is connected, then $M \setminus \overline{D}$ is connected. 
	\end{lem}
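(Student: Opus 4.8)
The plan is to show that the open set $W := M \setminus \overline{D}$ has exactly one connected component. First I would produce a two-sided collar of the boundary. Since $\partial D$ is a $C^k$ hypersurface that bounds the open set $D$, at each boundary point the direction pointing into $D$ distinguishes the two local sides, so the normal bundle of $\partial D$ is orientable, hence trivial; the tubular neighborhood (collar) theorem then yields a diffeomorphism $\Phi : \partial D \times (-\epsilon, \epsilon) \to U$ onto a neighborhood $U$ of $\partial D$ with $\Phi(\partial D \times \{0\}) = \partial D$, $\Phi(\partial D \times (-\epsilon, 0)) \subseteq D$, and $\Phi(\partial D \times (0, \epsilon)) \subseteq W$; in particular $U \cap W = \Phi(\partial D \times (0, \epsilon))$. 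Because $\partial D$ is connected, this outer collar $\Phi(\partial D \times (0, \epsilon))$ is connected, so it is contained in a single connected component $C_0$ of $W$.

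Next I would analyze an arbitrary component $C$ of the open set $W$. Since $C \subseteq W \subseteq M \setminus D$ and $M \setminus D$ is closed, we have $\overline{C} \subseteq M \setminus D$. Moreover, a frontier point of $C$ cannot lie in $W$: if it did, it would lie in some component of the open set $W$, and being a limit of points of $C$ it would be forced to lie in $C$ itself, contradicting that frontier points are not in $C$. Hence $\partial C = \overline{C} \setminus C \subseteq (M \setminus D) \setminus W = \overline{D} \setminus D = \partial D$. I would also note that $\partial C \neq \emptyset$: otherwise $C$ would be open and closed in the connected manifold $M$, forcing $C = M$, which is impossible since $D \neq \emptyset$.

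Finally, I would show that every component equals $C_0$. Fix a component $C$ and a point $p \in \partial C \subseteq \partial D$. Choosing a sequence $q_i \in C$ with $q_i \to p$, for large $i$ we have $q_i \in U \cap W = \Phi(\partial D \times (0, \epsilon)) \subseteq C_0$; since distinct components are disjoint, this forces $C = C_0$. Thus $W$ has a single component and is connected. I expect the main technical point to be the rigorous construction of the two-sided collar --- specifically, verifying that the boundary is globally two-sided so that the inner and outer sides of the collar are consistently assigned to $D$ and to $W$ --- together with the point-set verification that the frontier of each component of $W$ is contained in $\partial D$; once these are in place, the connectedness of $\partial D$ immediately propagates to $W$.
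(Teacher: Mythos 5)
Your proof is correct, but it follows a genuinely different route from the paper's. You argue directly: build a global two-sided collar $\Phi : \partial D \times (-\epsilon,\epsilon) \to U$, note that connectedness of $\partial D$ makes the outer shell $\Phi(\partial D \times (0,\epsilon))$ connected, show every component of $W = M \setminus \overline{D}$ has nonempty frontier contained in $\partial D$, and conclude that every component meets (hence equals) the component containing the outer shell. The paper instead argues by contradiction using only the local flattening charts that come for free with the definition of a $C^k$ boundary: if $M \setminus \overline{D} = X \sqcup Y$ with $X, Y$ open and nonempty, then $\partial X \cup \partial Y = \partial(M\setminus\overline{D}) = \partial D$, so connectedness of $\partial D$ forces a common frontier point $p \in \partial X \cap \partial Y$; the chart at $p$ shows that the outside of $\overline{D}$ near $p$ is a connected half-ball, which cannot meet both $X$ and $Y$ --- a contradiction. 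The trade-off is exactly the technical point you flagged: your approach invokes a global collar/tubular neighborhood theorem, which for merely $C^1$ and possibly non-compact $\partial D$ (the lemma assumes neither $C^\infty$ regularity nor compactness) is not the off-the-shelf smooth tubular neighborhood theorem; you would need either Brown's bicollar theorem for locally flat, two-sided hypersurfaces, or a construction by flowing along a smooth vector field transverse to $\partial D$, and a topological (rather than smooth) bicollar suffices for your argument. The paper's proof avoids this machinery entirely, at the cost of being indirect; your proof, once the collar is justified, gives the slightly more vivid picture that the entire outer neighborhood of $\partial D$ is one connected shell into which every component of the complement must flow. Both proofs share the same underlying mechanism --- connectedness of $\partial D$ plus the fact that the one-sided neighborhoods of the boundary inside $W$ are connected (globally for you, locally for the paper) --- so your proposal is a valid, somewhat heavier-duty alternative.
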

	
	\begin{proof}
		Suppose that $M \setminus \overline{D}$ is disconnected. Then we may write $M \setminus \overline{D}$ as the disjoint union of two open sets $X, Y \subset M$. Then it is easy to see that $\partial(M \setminus \overline{D}) = \partial X \cup \partial Y$ and $\partial(M \setminus \overline{D}) = \partial D$. Since $\partial D = \partial X \cup \partial Y$ is connected by the assumption, $\partial X \cap \partial Y \neq \emptyset$. Let $p \in \partial X \cap \partial Y$. Since $\partial D$ is the $C^k$ smooth embedded submanifold of $M$, there is a smooth chart $\psi : U_p \rightarrow \RR^n$ at $p$ such that 
		\begin{itemize}
			\item $\psi(p) = 0$,
			\item $\psi(U_p \cap D) = \psi(U_p) \cap \{(x_1, \cdots, x_n) \in \RR^n : x_n < 0 \}$,
			\item $\psi(U_p \cap \partial D) = \psi(U_p) \cap \{(x_1, \cdots, x_n) \in \RR^n : x_n = 0 \}$,
		\end{itemize}
		where $U_p$ is an open neighborhood of $p$ in $M$. Then $\psi(U_p \cap (M \setminus \overline{D})) = \psi(U_p) \cap \{(x_1, \cdots, x_n) \in \RR^n : x_n > 0 \}$ implies that $X \cap Y \neq \emptyset$, which is a contradiction. 
	\end{proof}

	\begin{thm}[CR-extension theorem] \label{2.5}
		Let $M$ be a Stein manifold with dimension $n$ $(n \ge 2)$. Let $D \subset M$ be a relatively compact domain with connected $C^k (k \ge 1)$ smooth boundary. If there exists a $C^s (1 \le s \le k)$ smooth CR-function $f : \partial D \rightarrow \CC$, then there is a $C^s$ smooth function $F$ on $\overline{D}$, holomorphic on $D$, such that $F|_{\partial D} = f$.
	\end{thm}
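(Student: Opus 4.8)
The plan is to reduce the theorem to Proposition \ref{2.3} by verifying its three hypotheses for $M$: non-compactness, the vanishing $H^{0,1}_c(M) = 0$, and connectedness of $M \setminus \overline{D}$. As a preliminary normalization I would pass to the connected component $M_0$ of $M$ containing the connected, relatively compact set $\overline{D}$. A connected component of a Stein manifold is again Stein, and $\overline{D} \subset M_0$, so there is no loss of generality in assuming $M$ itself connected, which I do from now on; all the hypotheses of Proposition \ref{2.3} refer to this component.

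Two of the three hypotheses are then straightforward. Non-compactness is automatic: a connected compact complex manifold admits only constant holomorphic functions, so a Stein manifold of dimension $n \ge 2$ cannot be compact. Connectedness of $M \setminus \overline{D}$ is exactly the content of Lemma \ref{2.4}, applied to the connected manifold $M$ and the connected $C^k$ smooth boundary $\partial D$; these are precisely the data we have arranged.

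The substantive step, and the only one that uses the hypothesis $n \ge 2$, is the vanishing $H^{0,1}_c(M) = 0$. I would obtain this from Serre duality together with Cartan's Theorem B. Serre duality pairs $H^{0,1}_c(M)$ with $H^{n,n-1}(M)$ via $(\alpha,\beta) \mapsto \int_M \alpha \wedge \beta$, identifying the former with the topological dual $(H^{n,n-1}(M))'$ once the relevant Dolbeault groups are Hausdorff. Because $M$ is Stein, Theorem B gives $H^{p,q}(M) = H^q(M, \Omega^p) = 0$ for every $q \ge 1$; in particular $H^{n,n-1}(M) = 0$ --- here the hypothesis $n \ge 2$ enters through $n-1 \ge 1$ --- and $H^{n,n}(M) = 0$, both of which are trivially Hausdorff. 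Hence $H^{0,1}_c(M) \cong (H^{n,n-1}(M))' = 0$. I expect this to be the main obstacle: Serre duality on a non-compact manifold requires separation of the pertinent Dolbeault cohomology, which may fail in general, but it is harmless here because the relevant groups vanish identically on a Stein manifold. With all three hypotheses now in hand, Proposition \ref{2.3} furnishes a $C^s$ smooth function $F$ on $\overline{D}$, holomorphic on $D$, with $F|_{\partial D} = f$, which is precisely the asserted extension.
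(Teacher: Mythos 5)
Your proposal is correct and follows essentially the same route as the paper: verify the hypotheses of Proposition \ref{2.3} by using Serre duality together with the Stein vanishing $H^{n,n-1}(M)=0$ to get $H^{0,1}_c(M)=0$, and Lemma \ref{2.4} for the connectedness of $M \setminus \overline{D}$. Your extra care about the Hausdorff separation needed for Serre duality on a non-compact manifold (harmless here since the relevant groups vanish) is a worthwhile refinement that the paper delegates to the citation of Chakrabarti--Shaw, but it does not change the argument.
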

	
	\begin{proof}
		Since $M$ is a Stein manifold, $M$ is non-compact. By Serre Duality Theorem (\cite{ChakShaw2012}), $H^{0,1}_c(M)$ is dual to $H^{n,n-1}(M)$ and $H^{n,n-1}(M) = 0$ since $M$ is Stein (\cite{Hormander1990}). Therefore $H^{0,1}_c(M) = 0$. By Lemma \ref{2.4}, the connectedness of the boundary $\partial D$ implies the connectedness of $M \setminus \overline{D}$. Hence the theorem follows by Proposition \ref{2.3}.
	\end{proof}

	We are now ready to prove Theorem \ref{1.2}. \\

	\subsection{Proof of Theorem 1.2.}
		
	Since $M_2$ is a Stein manifold, there is an embedding of $M_2$ into $\CC^N$ for some large $N \in \NN$, and hence from now on we may regard $M_2$ as an embedded complex  submanifold of $\CC^N$. Therefore, we may write the given CR-diffeomorphism $f$ as $f = (f_1, \cdots, f_N )$, where each $f_i : \partial D_1 \rightarrow \CC$ ($i = 1, \cdots, N$) is a CR-function. By CR-extension theorem(Theorem \ref{2.5}), $f$ extends to $F : \overline{D_1} \rightarrow \CC^N$ holomorphically. \\
		
	First, we show that the image of $F$ is in $M_2$ (i.e. $F(\overline{D_1}) \subset M_2$). Let $p \in \partial D_1, q \in \partial D_2$ with $f(p)=q$. Since $M_2$ is a Stein manifold, there exists a holomorphic map $\phi_q : M_2 \rightarrow \CC^n$ such that $\phi_q$ is a local biholomorphism at $q$. Define $g_q : \partial D_1 \rightarrow \CC^n$ by $g_q := \phi_q \circ f$. By CR-extension theorem(Theorem \ref{2.5}), $g_q$ extends to $G_q : \overline{D_1} \rightarrow \CC^n$ holomorphically. Let $U_p$ be an open neighborhood of $p$ in $M_1$ such that $\phi_q^{-1}$ is well-defined on $G_q(U_p \cap \overline{D_1})$. Define $H_{p=f^{-1}(q)} : U_p \cap \overline{D_1} \rightarrow M_2$ by $H_p := \phi_q^{-1} \circ G_q$.	Patching $H_p$ for each $p \in \partial D_1$, we may construct $H : U \cap \overline{D_1} \rightarrow M_2$, holomorphic on $U \cap D_1$, for some neighborhood $U$ of $\partial D_1$. Since the holomorphic extension of CR-function is unique, $H$ is well-defined and $F|_{U \cap \overline{D1}} = H$. Therefore $F(U \cap \overline{D_1}) \subset M_2$. \\
	
	Now let $\tilde{p} \in U \cap D_1$, $p \in D_1$ and $\gamma$ be a curve in $D_1$ joining $\tilde{p}$ and $p$. We choose $m \in \NN$ and $p_j, q_j, U_j, W_j, V_j$ $(j=1, \cdots, m)$ as follows. 
		
		\begin{itemize}
			\item $p_1 := \tilde{p}$, $p_m := p$, and $p_j \in \gamma$ for every $j$.
			\item $\{U_j\}^{m}_{j=1}$ is an open cover of $\gamma$ with $p_j \in U_j$, and $U_j \cap U_{j+1} \neq \emptyset$ for every $j$.
			\item $q_j = F(p_j)$ for every $j$.
			\item $W_j$ is a neighborhood of $q_j$ in $\CC^N$ such that $F(U_j) \subset W_j$ and $V_j := W_j \cap M_2$ is the zero set of holomorphic functions $\psi^{j}_{\alpha} : W_j \rightarrow \CC$ $(\alpha = 1, \cdots, N-n)$ whenever $V_j$ is non-empty for every $j$.
		\end{itemize}
		
		\noindent
		Since $(U \cap D_1) \cap U_1 \neq \emptyset$ and $(\psi^{1}_{\alpha} \circ F)((U \cap D_1) \cap U_1) \equiv 0$, $(\psi^{1}_{\alpha} \circ F)(U_1) \equiv 0$ for each $\alpha = 1, \cdots, N-n$, (i.e. $F(U_1) \subset M_2$). 
		Similarly, since $(\psi^{2}_{\alpha} \circ F)(U_1 \cap U_2) \equiv 0$, $(\psi^{2}_{\alpha} \circ F)(U_2) \equiv 0$ for each $\alpha = 1, \cdots, N-n$, (i.e. $F(U_2) \subset M_2$). By induction, it follows that $F(U_m) \subset M_2$, which means $F(p) \in M_2$. Since $p \in D_1$ is arbitrary, $F(\overline{D_1}) \subset M_2$. \\

		\vspace{5 mm}
		\includegraphics[scale = 0.32]{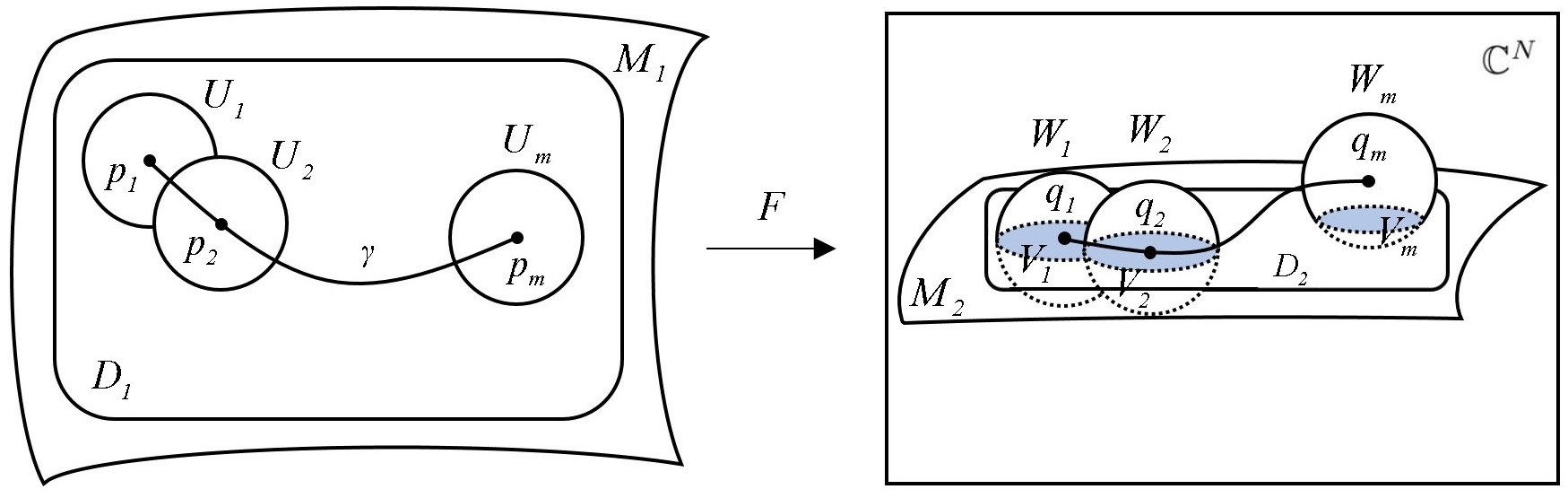}
		$$ \text{\bf Fig. 1.} $$
		\vspace{2 mm}

		Next, we show that $F(D_1) = D_2$. Take a $C^k$ smooth extension of $F$ on an open neighborhood of $\overline
		{D_1}$, and denote it again by $F$. Then we proceed with the following three steps: \\ 
		
		Step1) $det(J_{\CC} F) \neq 0$ on $\partial D_1$ : Choose any point $p \in \partial D_1$ and let $q := F(p) \in \partial D_2$. Let $\{x_1,y_1, \cdots, x_n, y_n\}$ be a real coordinate system at $p$ such that $\big\{\frac{\partial}{\partial x_1}\big|_p, \frac{\partial}{\partial y_1}\big|_p, \cdots, \frac{\partial}{\partial x_n}\big|_p\big\}$ spans $T_p(\partial D_1)$. Let $J_1$ and $J_2$ be the complex structures of $M_1$ and $M_2$, respectively. Since $F$ is holomorphic on $D_1$, $\bar{\partial}F = 0$ (i.e. $J_2 \circ dF = dF \circ J_1$) on $\overline{D_1}$. Consequently, $det(J_{\RR}F) = |det(J_{\CC}F)|^2$ holds at $p$. Therefore it is enough to show that $d_pF : T_p M_1 \rightarrow T_q M_2$ is $\RR$-linear isomorphism. If $d_pF\big(\frac{\partial}{\partial y_n}\big|_p\big) = 0$, then
		$$ d_pF\Big(-\frac{\partial}{\partial x_n}\Big|_p\Big) = d_pF\Big(J_1\Big(\frac{\partial}{\partial y_n}\Big|_p\Big)\Big) = J_2 \circ d_pF\Big(\frac{\partial}{\partial y_n}\Big|_p\Big) = 0, $$ 
		which contradicts that $F|_{\partial D_1}$ is a diffeomorphism. Suppose that $d_pF\Big(\frac{\partial}{\partial y_n}\Big|_p\Big) = w \neq 0 \in T_q(\partial D_2)$. Since $F|_{\partial D_1}$ is a diffeomorphism, there exists $v \in T_p(\partial D_1)$ such that $d_pF(v) = w$. Then 
		$$ d_pF\Big(\spn\Big\{\frac{\partial}{\partial x_n}\Big|_p, \frac{\partial}{\partial y_n}\Big|_p \Big\} \Big) =  d_pF(\spn\{v, J_1v\}) = \spn\{w, J_2w\} \subset T_q(\partial D_2), \text{ and}$$ 
		$$ \spn\Big\{\frac{\partial}{\partial x_n}\Big|_p, \frac{\partial}{\partial y_n}\Big|_p\Big\} \cap \spn{\{ v, J_1v \}} = \{0\} $$
		imply that there exists $X \in \spn{\{v, J_1v\}} \subset T_p(\partial D_1)$ such that  $d_pF(X) = d_pF\big(\frac{\partial}{\partial x_n}\big)$ with $X \neq \frac{\partial}{\partial x_n}$. This contradicts that $F|_{\partial D_1}$ is a diffeomorphism. Therefore $d_pF\big(\frac{\partial}{\partial y_n}\big|_p\big) \notin T_q(\partial D_2)$ and $d_pF : T_p M_1 \rightarrow T_q M_2$ is $\RR$-linear isomorphism.

		Step2) $det(J_{\CC} F) \neq 0$ on $D_1$ : Suppose that $Z := \{ z \in D_1 : det(J_{\CC} F|_{z}) = 0 \}$ is non-empty. Note that $Z$ is a well-defined closed $(n-1)$-dimensional analytic variety in $D_1$. Since $det(J_{\CC} F) \neq 0$ on $\partial D_1$ by (1), $Z$ is a compact analytic variety in $D_1$. Since $M_1$ is Stein, $Z$ should be finite (i.e. $\text{dim}_{\CC}(Z) = 0$). This contradicts that $n \ge 2$.  
		
		Step3) $F(D_1) = D_2$ : Since $det(J_{\CC} F) \neq 0$ on $D_1$ by (2), $F|_{D_1}$ is a local biholomorphism, hence an open map. Therefore $F(\partial D_1) = \partial(F(D_1))$ holds so $F(D_1)$ is either $D_2$ or $M_2 \setminus \overline{D_2}$. Since $M_2 \setminus \overline{D_2}$ is non-compact, $F(D_1) = D_2$. \\

		Now, we show that $F|_{D_1} : D_1 \rightarrow D_2$ is a biholomorphism. Since $f^{-1} : \partial D_2 \rightarrow \partial D_1$ is also a $C^k$ smooth CR-diffeomorphism, by using the same argument as above, there exists the holomorphic extension $G$ of $f^{-1}$ such that $G(D_2) = D_1$. Now consider the map $G \circ F : \overline{D_1} \rightarrow \overline{D_1}$. Since $G \circ F$ is holomorphic on $D_1$ and $G \circ F|_{\partial D_1} = id_{\partial D_1}$, $G \circ F = id_{\overline{D_1}}$. Similarly, by using the same argument, $F \circ G = id_{\overline{D_2}}$. Therefore, $F|_{D_1}$ is a biholomorphism.  \hfill $\Box$

	\section{\bf Invariance of Diederich-Fornaess index} \label{4}
	
	In Theorem \ref{1.2}, we extended the given CR-diffeomorphism $f : \partial D_1 \rightarrow \partial D_2$ to the interior of $D_1$ holomorphically. In this section, we extend it to the exterior of $D_1$ smoothly preserving the injectivity, and prove that the Diederich-Fornaess index is invariant under CR-diffeomorphisms.

	\begin{lem} \label{3.1}
		Let $M_1$, $M_2$ be Stein manifolds with dimension $n$ $(n \ge 2)$. Let $D_1 \subset M_1$, $D_2 \subset M_2$ be relatively compact domains with connected $C^k (k \ge 1)$ smooth boundaries. If there exists a $C^k$ smooth CR-diffeomorphism $f : \partial D_1 \rightarrow \partial D_2$, then there exist neighborhoods $U_1 \subset \overline{D_1}$, $U_2 \subset \overline{D_2}$, and a $C^k$ smooth diffeomorphism $F : U_1 \rightarrow U_2$ such that $F|_{\partial D_1} = f$ and $F|_{D_1} : D_1 \rightarrow D_2$ is a biholomorphism. 
	\end{lem}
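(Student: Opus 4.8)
The plan is to start from the map produced by Theorem \ref{1.2}. Applying that theorem with $s=k$ gives a $C^k$ map $F : \overline{D_1} \to \overline{D_2}$ with $F|_{\partial D_1} = f$ and $F|_{D_1} : D_1 \to D_2$ a biholomorphism. As in Step 1 of the proof of Theorem \ref{1.2}, I would first extend $F$ to a genuine $C^k$ map on some open neighborhood of $\overline{D_1}$ in $M_1$, using local boundary charts and a standard smooth extension. The two facts I want to record about this $F$ are: (i) $F$ is injective on $\overline{D_1}$, since $F|_{D_1}$ and $F|_{\partial D_1}=f$ are bijections onto the disjoint sets $D_2$ and $\partial D_2$, so that $F(\overline{D_1})=\overline{D_2}$; and (ii) the real differential $d_pF$ is nonsingular at every point $p \in \overline{D_1}$. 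Fact (ii) holds on $D_1$ because $F|_{D_1}$ is biholomorphic, whence $\det(J_{\RR}F)=|\det(J_{\CC}F)|^2 \neq 0$ there, and it holds on $\partial D_1$ precisely by Step 1 of the proof of Theorem \ref{1.2}, where $d_pF : T_pM_1 \to T_qM_2$ is shown to be an $\RR$-linear isomorphism.

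Given (ii), the inverse function theorem makes $F$ a local $C^k$ diffeomorphism near each point of $\overline{D_1}$, so the only real work is to promote these local diffeomorphisms to a single diffeomorphism on one neighborhood of $\overline{D_1}$; equivalently, to find an open set $U_1 \supset \overline{D_1}$ on which $F$ is injective. I would prove this by a compactness argument. Were there no such neighborhood, then along a sequence of open neighborhoods $W_j$ of $\overline{D_1}$ shrinking to $\overline{D_1}$ one could find $a_j \neq b_j \in W_j$ with $F(a_j)=F(b_j)$. By compactness of $\overline{D_1}$ I may pass to convergent subsequences $a_j \to a$, $b_j \to b$ with $a,b \in \overline{D_1}$. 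Continuity gives $F(a)=F(b)$, so fact (i) forces $a=b$. But $d_aF$ is nonsingular by (ii), so $F$ is injective on a neighborhood of $a=b$, which contradicts $a_j \neq b_j$ and $F(a_j)=F(b_j)$ for large $j$.

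Hence $F$ is injective on some open neighborhood $U_1$ of $\overline{D_1}$; being an injective local $C^k$ diffeomorphism, $F|_{U_1}$ is a $C^k$ diffeomorphism onto the open set $U_2 := F(U_1)$, which contains $F(\overline{D_1})=\overline{D_2}$ and is therefore a neighborhood of $\overline{D_2}$. The required properties $F|_{\partial D_1}=f$ and the fact that $F|_{D_1}:D_1\to D_2$ is a biholomorphism are inherited directly from Theorem \ref{1.2}.

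I expect the main obstacle to be exactly the injectivity step above: local invertibility is immediate from the inverse function theorem, but passing from injectivity on the compact set $\overline{D_1}$ to injectivity on an open neighborhood genuinely uses the nonsingularity of $d_pF$ at the boundary. That boundary nonsingularity is the nontrivial input supplied by Step 1 of Theorem \ref{1.2} — where the hypotheses $n\ge 2$ and the CR-diffeomorphism property enter — and without it one could not rule out the limit point $a=b \in \partial D_1$ being a folding or branching point of the extension.
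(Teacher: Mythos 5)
Your proof is correct, and it reaches the conclusion by a genuinely different route than the paper in the one step that matters. The common part: both you and the paper invoke Theorem \ref{1.2}, take a $C^k$ extension of $F$ past $\partial D_1$, and use Step 1 of the proof of Theorem \ref{1.2} to get nonsingularity of $d_pF$ at boundary points, hence local diffeomorphy near $\partial D_1$. The divergence is in how injectivity on a whole neighborhood of $\overline{D_1}$ is obtained. You use the standard differential-topology argument: $F$ is injective on the compact set $\overline{D_1}$ (your fact (i)) and $dF$ is nonsingular there, so a sequential compactness/contradiction argument yields injectivity on some neighborhood. The paper instead makes an explicit geometric construction: it fixes a Riemannian metric on $M_2$, builds a tubular neighborhood of $\partial D_2$ with nearest-point projection $\psi$, covers it by sets $V_q$, takes connected components $U_p$ of their preimages, and, for distinct $z_1,z_2$ outside $\overline{D_1}$ with $F(z_1)=F(z_2)=w$, pulls back the distance-realizing geodesic from $w$ to $\psi(w)$ along the two local inverses, contradicting local injectivity at $f^{-1}(\psi(w))$. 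Your argument is shorter, metric-free, and treats all configurations at once --- notably the mixed case of one point in $\overline{D_1}$ and one outside it, which the paper's case analysis (both points in $U\setminus\overline{D_1}$) leaves implicit --- whereas the paper's construction has the virtue of producing concrete neighborhoods $U_1=U\cup D_1$, $U_2=V\cup D_2$. Two small points you should make explicit to be airtight: first, choose the shrinking neighborhoods $W_j$ inside a fixed relatively compact neighborhood, e.g. $W_j=\{x : \dist(x,\overline{D_1})<1/j\}$, so that $\bigcap_j \overline{W_j}=\overline{D_1}$ and the limits $a,b$ really do lie in $\overline{D_1}$; second, shrink $U_1$ at the end so that $dF$ is nonsingular on all of $U_1$, not just on $\overline{D_1}$ (this is automatic since nonsingularity is an open condition holding on the compact set $\overline{D_1}$), so that $F|_{U_1}$ is an injective local diffeomorphism and hence a $C^k$ diffeomorphism onto the open set $U_2=F(U_1)\supset\overline{D_2}$.
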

	
	\begin{proof}
		By Theorem \ref{1.2}, there exists the $C^k$ smooth extension $F : \overline{D_1} \rightarrow \overline{D_2}$ such that $F|_{D_1} : D_1 \rightarrow D_2$ is a biholomorphism. Take a $C^k$ smooth extension of $F$ on an open neighborhood of $\overline{D_1}$, and denote it again by $F$. We show that there exist neighborhoods $U_1 \subset \overline{D_1}$, $U_2 \subset \overline{D_2}$ such that $F|_{U_1} : U_1 \rightarrow U_2$ is a diffeomorphism. In the proof of Theorem \ref{1.2}, we showed that $det(J_{\RR}F|_p) \neq 0$ for all $p \in \partial D_1$. Hence $F$ is a local diffeomorphism at every $p \in \partial D_1$. 
		
		Next, choose any Riemannian metric $g$ on $M_2$. Define $\text{dist}(w) : M_2 \rightarrow \RR$ by the distance from $w$ to $\partial D_2$ with respect to $g$. Let $V_{\delta} := \{ w \in M_2 : \text{dist}(w) < \delta \}$. Choose $\delta > 0 $ so that for each $w \in V_{\delta}$ there exists the unique closed point in $\partial D_2$ from $w$. Define $\psi(w) : V_{\delta} \rightarrow \partial D_2$ by the closed point in $\partial D_2$ from $w$. For each $q \in \partial D_2$, define $V_q := \{ w \in V_{\delta} : \psi(w) \in \partial D_2 \cap \mathbb{B}(q,\epsilon) \}$, where $\mathbb{B}(q,\epsilon)$ is the geodesic ball centered at $q$ with radius $\epsilon > 0$.
		Let $U_{F^{-1}(q)}$ be the connected component of $F^{-1}(V_q)$ containing $F^{-1}(q)$.
		By letting $\epsilon > 0, \delta > 0$ small, if necessarily, $F|_{U_p} : U_p \rightarrow V_{F(p)}$ is injective for all $p \in \partial D_1$. 
		Let $U := \underset{p \in \partial D_1}{\bigcup} U_p$ and $V := \underset{q \in \partial D_2}{\bigcup}  V_q$, then $U$ and $V$ are neighborhoods of $\partial D_1$ and $\partial D_2$, respectively.
		
		Now choose any $z_1, z_2 \in U \setminus \overline{D_1}$. Then $z_1 \in U_{p_1}$, $z_2 \in U_{p_2}$ for some $p_1, p_2 \in \partial D_1$. Let $q_1 := F(p_1), q_2 := F(p_2)$. Suppose that $w := F(z_1) = F(z_2) \in V_{q_1} \cap V_{q_2}$. If $p_1 = p_2$, then since $F|_{U_{p_1}} : U_{p_1} \rightarrow V_{q_1}$ is a diffeomorphism, $z_1 = z_2$. If $p_1 \neq p_2$, then the distance realizing geodesic $\gamma$ joining $w$ and $q_0 := \psi(w)$ is in $V_{q_1} \cap V_{q_2}$. Note that $(F|_{U_{p_1}})^{-1} (\gamma)$, $(F|_{U_{p_2}})^{-1} (\gamma)$ are curves joining $z_1, F^{-1}(q_0)$ and $z_2, F^{-1}(q_0)$, respectively. Therefore if $z_1 \neq z_2$, then $F|_{U_{p_1}} : U_{p_1} \rightarrow V_{q_1}$ is not injective near $F^{-1}(q_0)$, which is a contradiction. By letting $U_1 := U \cup D_1$, $U_2 := V \cup D_2$, Lemma \ref{3.1}  is proved.
	\end{proof}

	\vspace{5 mm}
	\includegraphics[scale = 0.52]{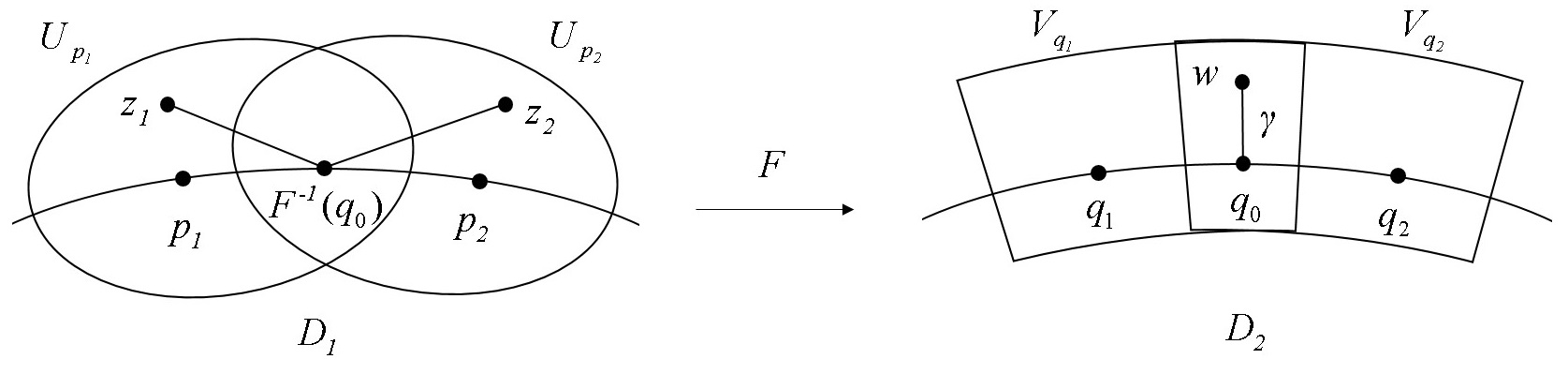}
	$$ \text{\bf Fig. 2.} $$
	\vspace{2 mm}

	\subsection{Proof of Theorem 1.1}

		By Lamma \ref{3.1}, there exists a $C^k$ smooth diffeomorphism $F : U_1 \rightarrow U_2$ such that $F|_{\partial D_1} = f$ and $F|_{D_1} : D_1 \rightarrow D_2$ is a biholomorphism, where $U_1$ and $U_2$ are neighborhoods of $\overline{D_1}$ and $\overline{D_2}$, respectively. If $\eta(D_1)$ and $\eta(D_2)$ are both zero, then we are done. So we may assume that $\eta(D_2) > 0$. Let $\rho_2$ be a $C^s$ $(1 \le s \le k)$ smooth defining function of $D_2$, then $\rho_1 := \rho_2 \circ F$ is a $C^s$ smooth defining function of $D_1$. If $-(-\rho_2)^{\eta}$ is strictly plurisubharmonic on $D_2$ for some $0 < \eta \le 1$, then $-(-(\rho_2 \circ F))^{\eta}$ is also strictly plurisubharmonic on $D_1$ with the same $\eta$ because $F$ is holomorphic on $D_1$. This implies that $\eta(D_1) \ge \eta$, and hence $\eta(D_1) \ge \eta(D_2)$. Therefore $\eta(D_1) > 0$. Applying the same argument as above with $F^{-1} : U_2 \rightarrow U_1$, one can see that $\eta(D_1) \le \eta(D_2)$. Hence $\eta(D_1) = \eta(D_2)$.  \hfill $\Box$ \\

	
	\textit{Acknowledgements}: The author would like to express his deep gratitude to Professor Kang-Tae Kim for valuable guidance and encouragements, and to Professor R. M. Range for valuable advice. This work is part of author's Ph.D. dissertation at Pohang University of Science and Technology.
	

	\bibliographystyle{amsplain}

\begin{thebibliography}{10}
		
		\bibitem{Adachi2015}
		M. Adachi, \emph{A CR proof for a global estimate of the Diederich-Fornaess index of Levi-flat real hypersurfaces}, Complex analysis and geometry, 41–-48, Springer Proc. Math. Stat., \textbf{144}, Springer, Tokyo. (2015)
	
		\bibitem{BernChar2000}
		B. Berndtsson and P. Charpentier,
		\emph{A Sobolev mapping property of the Bergman kernel},
		Math. Z. \textbf{235}, no. 1, 1–-10. (2000) 
		
		\bibitem{Blocki2005}
		Z. B{\l}ocki,
		\emph{The Bergman metric and the pluricomplex Green function},
		Trans. Amer. Math. Soc. \textbf{357}, no. 7, 2613-–2625. (2005) 
		
		\bibitem{ChakShaw2012}
		D. Chakrabarti and M.-C. Shaw, \emph{$L^2$ Serre Duality on domains in complex manifolds and applications}, Trans. Amer. Math. Soc. \textbf{364}, no. 7, 3529–-3554. (2012)
		
		\bibitem{ChenFu2011}
		B.-Y. Chen and S. Fu,
		\emph{Comparison of the Bergman and Szeg{\"o} kernels},
		Adv. Math. \textbf{228}, no. 4, 2366-–2384. (2011) 
		
		\bibitem{DieForn1977}
		K. Diederich and J. Forn{\ae}ss, \emph{Pseudoconvex Domains: bounded strictly plurisubharmonic exhaustion functions}, Invent. Math. \textbf{39}, no. 2, 129-–141. (1977)
		
		\bibitem{DieForn1977-2}
		K. Diederich and J. Forn{\ae}ss, \emph{Pseudoconvex Domains: an example with nontrivial neighborhood}, Math. Ann. \textbf{225}, no. 3, 275-–292. (1997)
		
		\bibitem{Fichera1957}
		G. Fichera, 
		\emph{Caratterizzazione della traccia, sulla frontiera di un campo, di una funzione analitica di pi$\grave{u}$ variabili complesse}, Atti Accad. Naz. Lincei. Rend. Cl. Sci. Fis. Mat. Nat. (8) \textbf{22} 706–-715. (1957) 
		
		\bibitem{ForHer2007}
		J. E. Forn{\ae}ss and A. K. Herbig, \emph{A note on plurisubharmonic defining functions in $\CC^2$}, Math. Z. \textbf{257}, no. 4, 769-–781. (2007)
		
		\bibitem{ForHer2008}
		J. E. Forn{\ae}ss and A. K. Herbig, \emph{A note on plurisubharmonic defining functions in $\CC^n$}, Math. Ann. \textbf{342}, no. 4, 749-–772. (2008)
		
		\bibitem{Harrington2007}
		P. S. Harrington,
		\emph{The order of plurisubharmonicity on pseudoconvex domains with Lipschitz boundaries},  
		Math. Res. Lett. \textbf{15}, no. 3, 485-–490. (2008)
		
		\bibitem{Harrington2011}
		P. S. Harrington, \emph{Global regularity for the $\bar{\partial}$-Neumann operator and bounded plurisubharmonic exhaustion functions}, Adv. Math. \textbf{228}, no. 4, 2522-–2551. (2011)
		
		\bibitem{Hormander1990}
		L. H{\"o}rmander,
		\emph{An introduction to complex analysis in several variables},
		Third edition. North-Holland Mathematical Library, \textbf{7}. North-Holland Publishing Co., Amsterdam. (1990)
		
		\bibitem{Kohn1999}
		J. J. Kohn,  \emph{Quantitative estimates for global regularity}, Analysis and geometry in several complex variables (Katata, 1997), 97–-128, Trends Math., Birkhäuser Boston, Boston, MA. (1999)
		
		\bibitem{KranLiuPelo2016}
		S. G. Krantz, B. Liu and M. Peloso,
		\emph{Geometric Analysis on the Diederich-Forn{\ae}ss Index}, arXiv:1606.02343. (2016)
			
		\bibitem{LaurThie2011}
		C. Laurent-Thiebaut, \emph{Holomorphic function theory in several variables}, Universitext. Springer-Verlag London, Ltd., London; EDP Sciences, Les Ulis, xiv+252 pp. (2011)
				
		\bibitem{PinZam2014}
		S. Pinton and G. Zampieri,
		\emph{The Diederich-Fornaess index and the global regularity of the $\bar{\partial}$-Neumann problem}, 
		Math. Z. \textbf{276}, no. 1-2, 93-–113. (2014) 
		
		\bibitem{Severi1931}
		F. Severi,
		\emph{Risoluzione generale del problema di Dirichlet per le funzioni biarmoniche}, Rend. Reale Accad. Lincei \textbf{23} 795--804. (1931)
		
		\bibitem{Range1986}
		R. M. Range, 
		\emph{Holomorphic functions and integral representations in several complex variables}, Graduate Texts in Mathematics, \textbf{108}, Springer-Verlag, New York, (1986)
		
		\bibitem{Range2002}
		R. M. Range,  
		\emph{Extension phenomena in multidimensional complex analysis: correction of the historical record}, Math. Intelligencer \textbf{24}, no. 2, 4-–12. (2002)
		

		
		
		
		
		
	\end{thebibliography}

\end{document}